\newtheorem{theorem}{Theorem}
\newtheorem{corollary}[theorem]{Corollary}
\newtheorem{definition}[theorem]{Definition}
\newtheorem{lemma}[theorem]{Lemma}
\newtheorem{remark}[theorem]{Remark}
\def\H{{\mathscr{H} }}
\def\L{{\mathscr{L}}}
\def\M{{\mathscr{ M}}}
\def\N{{\mathscr{N}}}
\def\F{{\mathscr{F}}}
\def\P{\Phi}
\def\A{{\mathscr{A} }}\def\B{{\mathscr{B} }}
\def\M{{\mathscr{M} }}
\def\t{{\tau }}
\def\S{{\mathscr{S} }}
\def\T{{\mathscr{T} }}
\def\R{{\mathscr{R} }}
\def\R{\mathscr{R}}
\begin{document}

\title{On a von Neumann algebra which is a complemented subspace}

%    Information for first author
\author{Erik Christensen}
%    Address of record for the research reported here
\address{Department of Mathematical Sciences, University of Copenhagen, Copenhagen, Denmark}
%    Current address
%\curraddr{Department of Mathematics and Statistics,
%Case Western Reserve University, Cleveland, Ohio 43403}
\email{E-mail: echris@math.ku.dk}
%    \thanks will become a 1st page footnote.
%\thanks{The first author was supported in part by NSF Grant \#000000.}

%    Information for second author
\author{Liguang Wang}
\address{School of Mathematical Sciences, Qufu Normal University, Qufu, 273165, China}
\email{E-mail: wangliguang0510@163.com}
\thanks{Partially supported  by NSFC (11371222) and NSF of Shandong Province (ZR2012AM024).}

 % \author{\textbf{Erik Christensen}  \\
 %{\small {Department of Mathematical Sciences}}\\
 %{\small {University of Copenhagen, Copenhagen, Denmark}}\\
 %{\small {E-mail: echris@math.ku.dk} }\\
 %{\textbf{Liguang Wang\thanks{Partially supported  by NSFC (11371222) and NSF of Shandong Province %(ZR2012AM024).}}} \\
% {\small School of Mathematical Sciences}\\
% {\small Qufu Normal University, Qufu,  China}\\
% {\small E-mail: wangliguang0510@163.com }}
 \date{today}
\keywords{Type II$_1$ factor; Fundmental group; Hyperfinite type II$_1$ factor; Injective von Neumann algebra; Complemented subspace.}

\begin{abstract}
Let $\M$ be a von Neumann algebra of type II$_1$ which is also a
complemented subspace of $\B(\H).$ We establish an algebraic
criterion,  which ensures that $\M$ is an injective von Neumann
algebra. As a corollary we
show that if $\M$ is a complemented factor of type II$_1$ on a Hilbert space $\H$, then $\M$ is injective if its fundamental group  is non-trivial.
\end{abstract}

\subjclass[2000]{ 46L10, 46L50}

\maketitle

\baselineskip=15pt \vskip0.1cm
\section{Introduction}

%Von Neumann algebras were introduced by J. von Neumann \cite{V1929} in 1929.
In the early works \cite{MVN36}-\cite{MVN43}  by Murray and von Neumann, they realized that there is a certain sort of {\em rings of operators } which to a large extent behave like the algebras $M_n(\mathbb{C})$ consisting of all complex $n \times n$  matrices, except that the natural dimension function now has the image $[0, 1]$ instead of the set $\{0, 1, \dots , n\}$. Today $\emph{rings of operators}$ are called von Neumann algebras and the ones with a continuous dimension function with values in $[0, 1]$ are called  von Neumann algebras of type II$_1$. Factors are von Neumann algebras whose centers consist of scalar multiples of the identity. Finite-dimensional factors are (isomorphic to) full matrix algebras. Infinite-dimensional factors admitting a positive and bounded trace  are called factors of type II$_1$. Murray and von Neumamnn also realized that there are at least two non-isomorphic factors  of type II$_1$, namely the free group factor $\L(\mathbb{F}_2)$ and the hyperfinite type II$_1$ factor $\R$, where $\L(\mathbb{F}_2)$ is the  von Neumann algebra obtained by taking the ultraweak closure of the left regular representation of the non-abelian free group $\mathbb{F}_2$ on 2 generators,  while the  hyperfinite type II$_1$ factor $\R$ was constructed  as the ultraweak closure of an increasing sequence of full matrix algebras.

The hyperfinite type II$_1$ factor $\R$ is easily proven to be  an
injective object in the category of C$^{\ast}$-algebras and completely
positive mappings (see  Paulsen's book  \cite{Pa}),  and it turns out
\cite{To} that a C$^{\ast}$-algebra $\A$ acting on a Hilbert space
$\H$ is  injective  if and only if there exists a projection $\Pi :
\B(\H) \to \A$ of norm 1. Such a projection turns out to be completely
positive and $\A$-modular, in the sense that $\Pi(AT) = A\Pi(T) $ and
$\Pi(TA) = \Pi(T)A$ for all $T$ in $\B(\H)$ and $A$ in $\A$.  The
fundamental work by Connes \cite{C1976} showed among other things,
that all injective type II$_1$ factors on a separable Hilbert space
are isomorphic to the hyperfinite type II$_1$ factor $\R$.   Based on
parts of this work, Connes gave in \cite{C1978} a cohomological
characterization of the injective type II$_1$ factor. The module
Connes used as coefficients for the cohomology consists of bounded
mappings on $\B(\H)$ which are modular for the commutant $\M^\prime.$
This module was then studied by Bunce and Paschke in \cite{BP} and
they proved  that if a von Neumann algebra $\M$ on a Hilbert space
$\H$ is the image of a bounded $\M$-module projection $\Pi : \B(\H)
\to \M,$ then $\M$ is injective. It is then a natural question to ask
if a von Neumann algebra $\M$ on a Hilbert space $\H$, which is a
complemented subspace of $\B(\H) $,  must be injective. For such an
algebra, there exists a bounded projection $\Pi: \B(\H) \to \M$ and it
was proven - by quite different methods - by Pisier in \cite{P3} and
by the first named author and Sinclair in \cite{CS1}, that if the
projection $\Pi$ is completely bounded (see \cite{Pa}), then $\M$ is
an injective von Neumann algebra. In Corollary 4.6 of \cite{HP}, which was published
before the just named result, Haagerup and Pisier showed that if a
von Neumann algebra $\M$ is isomorphic to $M_n(\M)$ for some natural
number $n \geq 2$ and also is a complemented subspace then there exists
a completely bounded projection from $\B(\H)$ onto $\M.$ It then
follows that if a type II$_{1}$ factor $\M$ is a complemented subspace
and its fundamental group contains an integer $n \geq 2$ then it is
injective.

It is well known that type I von Neumann  algebras are injective and
the results on completely bounded mappings yield  quite easily that if
a properly infinite von Neumann algebra is a complemented subspace,
then it is an injective von Neumann algebra. For a von Neumann algebra
$\M$ which is a complemented subspace of $\B(\H)$, the question of
injectivity is then reduced to the case of von Neumann algebra of type
II$_1.$   Some partial results, which all show injectivity, exist
(\cite{CS2}, \cite{C1}, \cite{HP}, \cite{P1}, \cite{P2} ) and in this article we will add some more, which are based on properties of  certain $\ast$-endomorphisms on $\M.$

We show that if a von Neumann algebra $\M$ of  type II$_1$  on a
Hilbert space $\H$ is a complemented subspace of $\B(\H)$ and $\Phi:
\M \to \M $ is an injective normal $\ast$-endomorphism which has the
property that for the center-valued trace $\mathrm{Tr}$ of $\M$ onto
the center of $\M$,  we have  $\|\mathrm{Tr}(\Phi(I))\|< 1,$
then $\M$ is injective. This result may be applied to the factor case
and we show that if $\M$ is a factor of type II$_1$ on a Hilbert space
$\H$ and there is a bounded projection from $\B(\H)$ onto $\M$, then
$\M$ is injective, if the fundamental group of $\M$ is non-trivial.

The proof of the theorem is based on a trick which shows the existence of a completely bounded projection from $\B(\H)$ onto $\M$. The methods used in the trick are very much inspired by some matrix constructions, which were used by Pisier in his studies on the similarity degree. Examples of such constructions may be found in the article \cite{P4}.

\section{Complemeted von Neumann algebras and endomorphisms}

In this section $\M$ will always denote a von Neumann algebra of type II$_1$ on a Hilbert space $\H$ and $\Phi : \M \to \M$ a  normal and injective  $\ast$-endomorphism. We will define the projection $E$ in $\M$ by $E:= \Phi(I)$ and let $\mathrm{Tr}: \M \to \M \cap \M^{\prime}$ denote the unique center-valued trace of $\M$. For a linear mapping $\Gamma $ between operator algebras, we will use the notation $\Gamma_n := \Gamma \otimes \mathrm{id}_{M_n(\mathbb{C})}$ and $\|\Gamma\|_n := \|\Gamma_n\|.$

It is our aim to prove the following theorem.

\begin{theorem} \label{TTT} Suppose $\Pi: \B(\H) \to \M$ is a bounded projection and  $\Phi: \M\rightarrow \M$ is a normal injective  $\ast$-homomorphism. If $\|\mathrm{Tr}(\Phi(I))\|< 1$,  then $\M$ is  injective.
\end{theorem}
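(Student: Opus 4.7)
My plan is to produce, from the bounded projection $\Pi$, a \emph{completely bounded} projection $\tilde\Pi\colon\B(\H)\to\M$; by the theorems of Pisier \cite{P3} and Christensen-Sinclair \cite{CS1} recalled in the introduction, the existence of such a $\tilde\Pi$ is equivalent to the injectivity of $\M$. The endomorphism $\Phi$ together with the trace inequality $\|\mathrm{Tr}(E)\|<1$, where $E=\Phi(I)$, will supply the matrix-unit scaffolding needed for this upgrade.

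The first step is algebraic: using Murray--von Neumann comparison in the type II$_1$ algebra $\M$ together with the trace inequality, I would produce, for a suitable integer $n$, partial isometries $v_1,\dots,v_n\in\M$ whose final projections $\{v_iv_i^{*}\}$ are mutually orthogonal and sum to $I$ and whose initial projections $v_i^{*}v_i$ all equal a common projection $e_0=\Phi(q)\in\Phi(\M)$. The trace hypothesis is precisely what allows this common initial projection to be selected inside the smaller algebra $\Phi(\M)$. Letting $V=[v_1,\dots,v_n]\colon\H^{n}\to\H$ denote the row operator assembled from the $v_i$, the simplest candidate is
\[
\tilde\Pi_0(T)\;=\;\sum_{i,j=1}^{n} v_i\,\Pi(v_i^{*}Tv_j)\,v_j^{*}\;=\;V\cdot(\Pi\text{ entrywise})(V^{*}TV)\cdot V^{*},\qquad T\in\B(\H),
\]
which is a bounded projection of $\B(\H)$ onto $\M$ because $\sum v_i v_i^{*}=I$ gives $\tilde\Pi_0|_\M=\mathrm{id}_\M$.

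The decisive, and most delicate, step is to refine this $\tilde\Pi_0$ using the trace-preserving conditional expectation $E_{\Phi(\M)}\colon\M\to\Phi(\M)$ (which exists because $\M$ is finite) and the $*$-isomorphism $\Phi^{-1}\colon\Phi(\M)\to\M$ into a related map $\tilde\Pi$ that remains a projection of $\B(\H)$ onto $\M$ and is additionally completely bounded. This is the ``trick'' the authors attribute to Pisier's similarity-degree matrix constructions \cite{P4}: the factorization of $\tilde\Pi$ through the completely contractive conjugations $T\mapsto V^{*}TV$ and $X\mapsto VXV^{*}$, the amplified $*$-isomorphism $(\Phi^{-1})_n$, and the amplified conditional expectation $(E_{\Phi(\M)})_n$ leaves only the entrywise application of $\Pi$ as a potentially unbounded piece; the rescaling provided by $\Phi$ should then absorb the required matrix amplification onto the completely contractive pieces, yielding a uniform bound $\|\tilde\Pi\|_m\leq K\|\Pi\|$ independent of $m$. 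The main obstacle will be to insert the rescalings via $\Phi^{-1}$ and $E_{\Phi(\M)}$ in such a way that both the projection property and this uniform complete-boundedness bound are achieved simultaneously; once this is done, the conclusion that $\M$ is injective follows from the Pisier / Christensen-Sinclair theorem.
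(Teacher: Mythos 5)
Your overall strategy---upgrade $\Pi$ to a completely bounded projection and invoke \cite{CS1} or \cite{P3}---is the paper's strategy, and your instinct that Murray--von Neumann comparison plus the trace hypothesis supplies the needed partial isometries is also correct. But the proof stops exactly where you say ``the main obstacle will be'': no mechanism is given that actually absorbs the matrix amplification, and the candidate $\tilde\Pi_0(T)=\sum_{i,j}v_i\Pi(v_i^{*}Tv_j)v_j^{*}$ cannot supply one. Assembling it as $\tilde\Pi_0(T)=V\Pi_n(V^{*}TV)V^{*}$ with the row co-isometry $V$ only yields $\|(\tilde\Pi_0)_m\|\le\|\Pi\|_{nm}$, which gives no information about complete boundedness; the entrywise application of $\Pi$ is precisely the piece that must be converted into a \emph{single} application of a bounded map, and nothing in the proposal does that. (There is also a smaller defect: requiring the final projections $v_iv_i^{*}$ to be orthogonal, mutually equivalent and to sum to $I$ while all initial projections equal one $\Phi(q)$ forces $\mathrm{Tr}(\Phi(q))=\tfrac1n I$ exactly, which is not available in a general non-factor type II$_1$ algebra; the paper avoids this by keeping a leftover projection $F=I-\sum E_i$ inside the finite-dimensional algebra.)

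The actual trick runs in the opposite direction. One first replaces $\Phi$ by a power $\Phi^{k}$ so that $\|\mathrm{Tr}(E)\|<1/n$, and amplifies so that $\Phi(T)=VTV^{*}$ for an honest isometry $V:\H\to E\H$ (neither reduction appears in your sketch, and the second is needed because $\Phi$ is a priori only spatially implemented after tensoring with $\ell^{2}$). Because $\mathrm{Tr}(E)<\tfrac1n$, there are $n$ orthogonal projections $E_i$ equivalent to $E=E_{11}$, giving matrix units $E_{ij}$; averaging $\Pi$ over the unitary group of $\A=\mathrm{span}\{F,E_{ij}\}$ produces an $\A$-modular projection $\Psi$ with $\|\Psi\|\le\|\Pi\|$ (Lemma \ref{AAA}), and one sets $\Gamma(T)=V^{*}\Omega(\Psi(VTV^{*}))V$ with $\Omega$ the conditional expectation onto $W^{*}(\Phi(\M),\A)$. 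Pushing $T$ \emph{into} the small corner $E\B(\H)E$ first is what makes room: with the isometry $W=[E_{11}V,\dots,E_{n1}V]$ the $\A$-modularity of $\Psi$ and $\Omega$ gives the compression identity $W\Gamma_n(X)W^{*}=\Omega(\Psi(WXW^{*}))$, hence $\|\Gamma\|_n\le\|\Pi\|$---a bound at the single matrix level $n$ only. One must then run this for every $n$ (iterating $\Phi$ each time) and take a pointwise ultraweak limit of the resulting sequence to obtain a genuinely completely bounded projection. The modular averaging, the compression identity, and the limiting argument over $n$ are the substance of the proof, and all three are absent from your proposal.
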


We will base the proof of Theorem \ref{TTT} on some steps which we formulate as individual lemmas.

\begin{lemma} \label{EEE} Let $n$ be a natural number. Without loss of generality we may assume that $\|\mathrm{Tr}(E)\| < \frac{1}{n}$ and that there exists a surjective  isometry $V: \H \to E(\H) $ such that for all $T \in \M $ we have $\Phi(T) = VTV^*.$ \end{lemma}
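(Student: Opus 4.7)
The plan is to establish the two reductions independently: the trace bound via iteration of $\Phi$, and the spatial implementation via amplification of the Hilbert space. Both modifications preserve all the hypotheses of Theorem~\ref{TTT}, so they may be performed in succession.

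For the trace bound, I replace $\Phi$ by an iterate $\Phi^k$. The composition $\mathrm{Tr}\circ\Phi:\M\to \M\cap\M'$ is a normal positive linear map which is tracial, since $\Phi$ is a $*$-homomorphism and $\mathrm{Tr}$ is tracial. By Dixmier's approximation theorem, $\mathrm{Tr}(T)$ lies in the norm-closed convex hull of the unitary orbit $\{UTU^*:U\in\mathcal{U}(\M)\}$, and any bounded tracial linear map on $\M$ is constant on this convex hull. Hence $\mathrm{Tr}(\Phi(T))=\mathrm{Tr}(\Phi(\mathrm{Tr}(T)))$ for every $T$, i.e.\ $\mathrm{Tr}\circ\Phi=\sigma\circ\mathrm{Tr}$, where $\sigma := (\mathrm{Tr}\circ\Phi)|_{\M\cap\M'}:\M\cap\M'\to\M\cap\M'$. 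Iterating yields $\mathrm{Tr}(\Phi^k(I))=\sigma^k(I)$. Since $\sigma$ is a positive linear map with values in a commutative C$^*$-algebra, $\|\sigma\|=\|\sigma(I)\|=\|\mathrm{Tr}(E)\|<1$, whence $\|\mathrm{Tr}(\Phi^k(I))\|\leq\|\mathrm{Tr}(E)\|^k<1/n$ for $k$ large enough.

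For the spatial implementation, I amplify by a separable infinite-dimensional Hilbert space $K$. Replace $(\H,\M,\Pi,\Phi)$ by $(\H\otimes K,\;\M\bar\otimes\C 1_K,\;\tilde\Pi,\;\tilde\Phi)$, where $\tilde\Phi(T\otimes 1):=\Phi(T)\otimes 1$ and $\tilde\Pi(A):=\Pi\bigl((\mathrm{id}\otimes\omega_\xi)(A)\bigr)\otimes 1$ for a fixed vector state $\omega_\xi$ on $\B(K)$. Then $\tilde\Pi$ is a bounded projection onto $\M\otimes 1$, $\tilde\Phi$ remains a normal injective $*$-endomorphism, and the norm of the center-valued trace of $\tilde\Phi(I)$ is unchanged, so all hypotheses (including the trace bound already achieved) remain in force. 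Crucially, the new commutant $\M'\bar\otimes\B(K)$ is properly infinite, so a standard spatial-implementation result produces an isometry $V$ on $\H\otimes K$ with $V^*V=I$, $VV^*=\Phi(I)\otimes 1$, and $V(T\otimes 1)V^*=\Phi(T)\otimes 1$ for all $T\in\M$.

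The main delicate step is the spatial implementation in the amplified setup. It relies on the fact that, after tensoring with $K$, the two $\M$-module structures on $\H\otimes K$ (the identity action) and on $(\Phi(I)\otimes 1)(\H\otimes K)$ (the $\tilde\Phi$-action) both have uniformly infinite module dimension; any two such modules are unitarily equivalent, and this equivalence is implemented by the required isometry $V$.
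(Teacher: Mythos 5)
Your proof is correct and takes essentially the same two-step route as the paper: replace $\Phi$ by an iterate $\Phi^k$ to force $\|\mathrm{Tr}(\Phi^k(I))\|<\frac1n$ (the paper derives the geometric decay via the restricted trace $\mathrm{Tr}_{\N}$ on $\N=\Phi(\M)$, you via the Dixmier-property identity $\mathrm{Tr}\circ\Phi=\sigma\circ\mathrm{Tr}$ --- a minor variation), and then amplify by an infinite-dimensional Hilbert space, which simultaneously preserves complementation (your slice-map $\tilde\Pi$ is the explicit form of the paper's appeal to injectivity of $\B(\H)\otimes\C I$) and makes the two faithful normal representations unitarily equivalent so that $\Phi$ becomes spatially implemented. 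The only caveat --- that amplification by a \emph{separable} $K$ equalizes the two module structures only for $\H$ of suitable dimension, with larger $\ell^2(\Gamma)$ needed in general --- is shared by the paper's own proof.
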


\begin{proof} Since $\Phi$ is faithful and $E \neq 0$, we have  $0 <  \alpha :=  \|\mathrm{Tr}(E)\| < 1.$ Since $\Phi$ is a normal isomorphism onto its image, say $\N$,  we get that $E$  is the unit of $\N$ and for $F:= \Phi(E)$ we get $ {\mathrm{Tr}}_{\N}(F) \leq \alpha E.$ Hence $ \mathrm{Tr}_{\M}(F) \leq  \alpha^2 I.$ This process may be iterated and we see that there exists a natural number $k$ such that $\| \mathrm{Tr}(\Phi^k(I))\| < \frac{1}{n}$.   A replacement of $\Phi$ by $ \Phi^k$ proves the first claim.

With respect to the second claim, we remark that if we look at the amplifications of $\M$ and $\Phi$ to the algebra $\tilde \M : = \M \otimes \mathbb{C} I$ on $\H \otimes l^2(\mathbb{N})$, then $\tilde \Phi $ is implemented by a surjective isometry $V: \H \otimes l^2(\mathbb{N}) \to E(\H) \otimes l^2(\mathbb{N})$. Since $\widetilde{\B(\H)}$  is an injective von Neumann algebra, it follows that $\tilde \M $ is complemented inside $\B(\H \otimes l^2(\mathbb{N}))$,  and thus we may as well assume that $\Phi(T) = VTV^{\ast}$ for $T\in \M$,  as claimed. \end{proof}

For projections $P$ and $Q$ in $\M$,
we say $P$ is weaker than $Q$ (and write   $P\precsim Q$) when $P$ is (Murray-von Neumann) equivalent to a subprojection of $Q$ (see Definition 6.2.1 of \cite{KR}).   The following lemma is probably well known, but we do not have a reference at hand.

\begin{lemma} \label{BBB} Let $P$ and $Q$ be projections in $\M$. Then $P\precsim Q$ if and only if $\mathrm{Tr}(P)\leq \mathrm{Tr}(Q)$.
\end{lemma}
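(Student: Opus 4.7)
The forward implication is the easy one: if $P\precsim Q$ then there is a projection $P_0\le Q$ with $P\sim P_0$. Since the center-valued trace $\mathrm{Tr}$ is tracial (hence unitarily invariant on equivalence classes of projections) we get $\mathrm{Tr}(P)=\mathrm{Tr}(P_0)$, and since $P_0\le Q$ and $\mathrm{Tr}$ is positive we get $\mathrm{Tr}(P_0)\le \mathrm{Tr}(Q)$. I would simply record this without further comment.

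For the reverse implication, my plan is to reduce to the factor case via the comparison theorem (Theorem 6.2.7 of Kadison--Ringrose). Assuming $\mathrm{Tr}(P)\le \mathrm{Tr}(Q)$, apply comparison to $P$ and $Q$ in the finite von Neumann algebra $\M$ to obtain a central projection $Z\in \M\cap \M'$ such that
\begin{equation*}
ZP\precsim ZQ \quad\text{and}\quad (I-Z)Q\precsim (I-Z)P.
\end{equation*}
On the $Z$-part there is nothing left to prove. The task is therefore to upgrade the second relation to $(I-Z)P\precsim (I-Z)Q$, after which we can add the two comparisons to obtain $P\precsim Q$.

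To do this I would exploit that $\mathrm{Tr}$ is a center-linear map: the hypothesis $\mathrm{Tr}(P)\le \mathrm{Tr}(Q)$ multiplied by the central projection $(I-Z)$ gives
\begin{equation*}
\mathrm{Tr}\bigl((I-Z)P\bigr)=(I-Z)\mathrm{Tr}(P)\le (I-Z)\mathrm{Tr}(Q)=\mathrm{Tr}\bigl((I-Z)Q\bigr).
\end{equation*}
On the other hand, the easy direction already proved, applied to $(I-Z)Q\precsim(I-Z)P$, gives the reverse inequality, so these two traces are equal. Now choose a projection $R\le (I-Z)P$ with $R\sim (I-Z)Q$; then $\mathrm{Tr}(R)=\mathrm{Tr}((I-Z)Q)=\mathrm{Tr}((I-Z)P)$, so $\mathrm{Tr}((I-Z)P-R)=0$. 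By faithfulness of the center-valued trace on a finite von Neumann algebra this forces $R=(I-Z)P$, whence $(I-Z)P\sim (I-Z)Q$, in particular $(I-Z)P\precsim(I-Z)Q$. Combined with $ZP\precsim ZQ$ (and noting $ZP$, $(I-Z)P$ are orthogonal, likewise for $Q$) this yields $P\precsim Q$.

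The substantive inputs are the comparison theorem for finite von Neumann algebras, the center-linearity of $\mathrm{Tr}$, and the faithfulness of $\mathrm{Tr}$; once these are invoked the argument is essentially formal, so I do not anticipate a genuine obstacle, only the slight care required to handle the central cut correctly.
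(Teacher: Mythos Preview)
Your proof is correct and uses essentially the same ingredients as the paper's: the comparison theorem, center-modularity of $\mathrm{Tr}$, and faithfulness of $\mathrm{Tr}$. The only cosmetic difference is that the paper argues by contradiction (assuming $P\not\precsim Q$, it obtains a nonzero central $Z$ with $ZQ\precsim ZP$ strictly and derives $\mathrm{Tr}(Z(P-Q))>0$ and $\le 0$ simultaneously), whereas you proceed directly by showing $(I-Z)P\sim (I-Z)Q$; the underlying argument is the same.
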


\begin{proof}   Suppose $P\precsim Q$. Then there exists a partial isometry $W$ in $\M$ such that $W^{\ast}W=P$  and $WW^{\ast}\leq Q$. Since $\mathrm{Tr}$ is positive and a trace $\big(\mathrm{Tr}(S^{\ast}S)=\mathrm{Tr}(SS^{\ast})$ for all $S$ in $\M\big)$, we get $\mathrm{Tr}(P)\leq \mathrm{Tr}(Q)$.

Suppose that $\mathrm{Tr}(P)\leq \mathrm{Tr}(Q)$ and $P$ is not weaker than $ Q$. By the Comparison Theorem (see Theorem 6.2.7 in \cite{KR}), there exists a nonzero central projection $Z$ such that $ ZQ\precsim ZP$ and $ZQ \nsim ZP$. Hence there exists a partial isometry $W$ in $\M$ such that $W^{\ast}W=ZQ$, $WW^{\ast}\leq ZP$  and $WW^{\ast}\neq ZP$. Since $\mathrm{Tr}$ is a faithful center-valued trace and center-modular, we get $\mathrm{Tr}(Z(P-Q))\geq 0$ and $\mathrm{Tr}(Z(P-Q))\neq 0$. On the other hand, $$\mathrm{Tr}(Z(P-Q))=Z\mathrm{Tr}(P-Q)=Z(\mathrm{Tr}(P)-\mathrm{Tr}(Q))\leq 0$$ which is a contradiction. So $P\precsim Q$   and the lemma follows.
\end{proof}

\begin{lemma} \label{CCC} Let $n$ be a natural number. If  $||\mathrm{Tr}(\Phi(I))||=||\mathrm{Tr}(E)||<\frac{1}{n},$
then there is a set $\{E_1, E_2, \cdots, E_n\}$ of pairwise orthogonal and equivalent projections in $\M$ such that $E_1=E$.
\end{lemma}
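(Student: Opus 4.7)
The plan is to build the projections $E_2,\dots,E_n$ one at a time by an inductive application of Lemma \ref{BBB}. Set $E_1:=E$. Since $\mathrm{Tr}(E)$ is a positive element of the abelian von Neumann algebra $\M\cap\M'$, the hypothesis $\|\mathrm{Tr}(E)\|<\frac{1}{n}$ says exactly that $\mathrm{Tr}(E)\leq \|\mathrm{Tr}(E)\|\,I<\tfrac{1}{n}I$ as central operators. This will be the only quantitative input we need.

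Now I would argue inductively. Suppose for some $k$ with $1\leq k<n$ we have already produced pairwise orthogonal projections $E_1,\dots,E_k$ in $\M$, each equivalent to $E$, and set $F_k:=E_1+\cdots+E_k$. Since $\mathrm{Tr}$ is additive and invariant under Murray--von Neumann equivalence, we have $\mathrm{Tr}(F_k)=k\,\mathrm{Tr}(E)\leq \frac{k}{n}I$, so
\[
\mathrm{Tr}(I-F_k)\ \geq\ \frac{n-k}{n}\,I\ \geq\ \frac{1}{n}\,I\ >\ \mathrm{Tr}(E).
\]
By Lemma \ref{BBB} this gives $E\precsim I-F_k$, so there is a projection $E_{k+1}\leq I-F_k$ in $\M$ with $E_{k+1}\sim E$. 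By construction $E_{k+1}$ is orthogonal to each $E_j$ for $j\leq k$, which advances the induction.

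Iterating this up to $k=n-1$ produces the desired family $\{E_1,\dots,E_n\}$ of pairwise orthogonal projections in $\M$, all equivalent to $E_1=E$. I do not expect any real obstacle: the only subtlety is to read $\|\mathrm{Tr}(E)\|<\frac{1}{n}$ correctly as a strict operator inequality in the center, and to remember that the inductive step only needs $k\leq n-1$ so that $(n-k)/n\geq 1/n$ guarantees enough ``room'' in $I-F_k$ for one more copy of $E$.
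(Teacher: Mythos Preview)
Your proof is correct and follows essentially the same route as the paper: the paper takes a maximal family $\{E_1,\dots,E_k\}$ of pairwise orthogonal projections equivalent to $E$ and shows, via the identical trace estimate $\mathrm{Tr}(I-F_k)\geq \tfrac{1}{n}I\geq \mathrm{Tr}(E)$ together with Lemma~\ref{BBB}, that $k<n$ would contradict maximality, whereas you package the same computation as an explicit induction. The two arguments are interchangeable reformulations of one another.
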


\begin{proof} Let $\{E_1, E_2, \cdots, E_k\}$ be a maximal family of pairwise orthogonal and equivalent projections in $\M$ with $E_1=E$. Such a family must be finite since $\M$ is a finite von Neumann algebra. Let $$F=I-E_1-E_2-\cdots-E_k.$$ If $k<n$, then \begin{align}\mathrm{Tr}(F)&=I-k\mathrm{Tr}(E)\geq I-(n-1)\mathrm{Tr}(E)\nonumber\\
&\geq I-\frac{n-1}{n}I=\frac{1}{n}I\geq \mathrm{Tr}(E).\nonumber\end{align}
Hence by  Lemma \ref{BBB}, $E\precsim F$ and the family $\{E_1, E_2, \cdots, E_k\}$ is not maximal which  is a contradiction. Therefore $k\geq  n$ and the lemma follows.
\end{proof}

\begin{lemma} \label{AAA} Suppose $\Pi : \B(\H) \to \M$ is a bounded projection and $\A$ is a finite dimensional von Neumann subalgebra of $\M$, which has the same unit as  $\M.$ Then there exists a bounded projection $\Psi : \B(\H) \to \M$ which is $\A$-modular and satisfies $\|\Psi\| \leq \|\Pi\|.$
\end{lemma}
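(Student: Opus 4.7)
The plan is to upgrade the given projection $\Pi$ to an $\A$-modular one by averaging it over the compact unitary group $U(\A)$ of $\A$. Because $\A$ is finite-dimensional, $U(\A)$ is a compact (Lie) group carrying a normalized Haar measure $du$, and the averaging will cost nothing in norm. I will perform two successive averagings: one to secure left modularity, then another for right modularity.

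First I would set
\[
\Psi_1(T) := \int_{U(\A)} u\, \Pi(u^* T)\, du, \qquad T \in \B(\H),
\]
interpreted as a Bochner integral. The integrand is norm-continuous (since $U(\A)$ is compact and $\Pi$ is bounded) and takes values in the norm-closed subspace $\M$, so the integral exists, lies in $\M$, and has norm at most $\|\Pi\|\,\|T\|$. Because $\Pi$ restricts to the identity on $\M$, for $A \in \M$ we have $\Pi(u^* A) = u^* A$ and hence $\Psi_1(A) = A$; so $\Psi_1 : \B(\H) \to \M$ is a bounded projection with $\|\Psi_1\| \leq \|\Pi\|$. Left-invariance of Haar measure applied to the substitution $u \mapsto au$ yields $\Psi_1(aT) = a\Psi_1(T)$ for every $a \in U(\A)$, and since $\A$ is the linear span of $U(\A)$, this promotes to left $\A$-modularity.

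Next I would symmetrize on the right by setting
\[
\Psi(T) := \int_{U(\A)} \Psi_1(Tu)\, u^*\, du.
\]
An analogous argument shows $\Psi$ is a bounded projection onto $\M$ with $\|\Psi\| \leq \|\Psi_1\| \leq \|\Pi\|$. Haar-invariance in this second integral gives right modularity $\Psi(Ta) = \Psi(T)a$ for $a \in U(\A)$, while left modularity survives because it can be pulled inside the integral: $\Psi(aT) = \int a\Psi_1(Tu)\, u^*\, du = a\Psi(T)$. Linearity then extends both identities from $U(\A)$ to all of $\A$, yielding full $\A$-modularity.

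The only mild technical point is existence of the two Bochner integrals, which is automatic since $U(\A)$ is compact and the integrands are norm-continuous. The rest is a routine compact-group averaging trick, and finite-dimensionality of $\A$ is precisely what makes it work; I do not foresee any serious obstacle.
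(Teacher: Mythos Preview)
Your proposal is correct and follows essentially the same averaging argument as the paper: the paper writes the averaged projection as a single double integral
\[
\Psi(S)=\int_{G}\int_{G}U_1\Pi(U_1^{\ast}S U_2)U_2^{\ast}\,d\mu(U_1)\,d\mu(U_2)
\]
over $G=U(\A)$, whereas you carry out the two averagings sequentially, but this is the same trick and yields the same map with the same norm bound.
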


\begin{proof} Let $G=U(\A)$ be the group of all unitary operators in $\A$. Then $G$ is a compact group and there is a Haar probability measure $\mu$ on $G$. For $S\in \B(\H)$, we define
\begin{displaymath}\Psi(S)=\int_{U_1\in G}\int_{U_2\in G}U_1\Pi(U_{1}^{\ast}S U_2)U_{2}^{\ast}d\mu(U_1)d\mu(U_2).\end{displaymath}
Then  for $T \in \M$,  we have $\Psi(T) = T$ and for $U_1, U_2\in G$, $S\in \B(\H)$, $$\Psi(U_1SU_2)=U_1\Psi(S)U_2$$ and the lemma follows. \end{proof}

\begin{lemma} \label{DDD}  If $||\mathrm{Tr}(E)||<\frac{1}{n}$, then there exists a projection $\Gamma$ of $\B(\H)$ onto $\M$ such that $||\Gamma||_n\leq ||\Pi||$.
\end{lemma}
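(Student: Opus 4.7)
The plan is to combine Lemmas \ref{EEE}, \ref{CCC}, and \ref{AAA} to obtain a projection $\Psi:\B(\H)\to \M$ that is modular over an $n$-dimensional matrix unit system inside $\M$, extract a Pisier-style $n$-norm bound for its corner $\psi:=\Psi|_{E\B(\H)E}$, and then assemble $\Gamma$ by conjugating $\psi$ with $V$ after correcting its image into $\N:=\Phi(\M)$ via a conditional expectation.

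For the setup, Lemma \ref{EEE} lets me assume $\Phi(A)=VAV^*$ for a surjective isometry $V:\H\to E(\H)$, and Lemma \ref{CCC} supplies pairwise orthogonal projections $E=E_1,E_2,\dots,E_n$ in $\M$, all equivalent to $E$. Choose partial isometries $W_i\in \M$ with $W_i^*W_i=E$ and $W_iW_i^*=E_i$ (taking $W_1=E$), and set $F=\sum_{i=1}^n E_i$. Let $\A$ be the finite-dimensional $*$-subalgebra of $\M$ generated by $\{W_iW_j^*:1\le i,j\le n\}$ together with $I-F$, so that $\A\cong M_n(\mathbb C)\oplus\mathbb C$ and shares the unit of $\M$; Lemma \ref{AAA} then produces an $\A$-modular projection $\Psi:\B(\H)\to \M$ with $\|\Psi\|\le\|\Pi\|$. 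The key matrix-level identification is that $\iota:M_n(E\B(\H)E)\to F\B(\H)F$ defined by $\iota([T_{ij}])=\sum_{i,j} W_iT_{ij}W_j^*$ is a $*$-isomorphism (using $W_i^*W_j=\delta_{ij}E$), which restricts to a $*$-isomorphism $M_n(E\M E)\to F\M F$. The $\A$-modularity of $\Psi$ translates under $\iota$ into $\Psi\circ\iota=\iota\circ(\psi\otimes\mathrm{id}_{M_n(\mathbb C)})$ with $\psi:=\Psi|_{E\B(\H)E}:E\B(\H)E\to E\M E$, and because $\iota$ is a complete isometry this forces the crucial bound $\|\psi\|_n=\|\Psi|_{F\B(\H)F}\|\le\|\Psi\|\le\|\Pi\|$.

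To finish, I define $\Gamma:\B(\H)\to \M$ by $\Gamma(T):=V^*P_\N(\psi(VTV^*))V$, where $P_\N:E\M E\to \N$ is the normal faithful conditional expectation (which exists because $E\M E$ is finite and $\N$ shares its unit $E$). In the factorisation $\B(\H)\xrightarrow{V\,\cdot\,V^*}E\B(\H)E\xrightarrow{\psi}E\M E\xrightarrow{P_\N}\N\xrightarrow{V^*\,\cdot\,V}\M$, the outer two arrows are $*$-isomorphisms (hence complete isometries), $P_\N$ is completely contractive, and $\psi$ has $\|\psi\|_n\le\|\Pi\|$, so the chain of $n$-norms yields $\|\Gamma\|_n\le\|\Pi\|$. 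That $\Gamma$ is a projection onto $\M$ is immediate: for $A\in \M$, $VAV^*=\Phi(A)\in \N\subset \M$, which is fixed by both $\psi$ and $P_\N$, and $V^*\Phi(A)V=A$. The main obstacle is really the matrix identity of the preceding paragraph, which is the Pisier-style trick hinted at in the introduction; once in place, the rest of the argument is a routine compositional estimate.
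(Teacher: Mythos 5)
Your proof is correct and follows essentially the same route as the paper: an $\A$-modular projection $\Psi$ over the matrix unit system from Lemmas \ref{CCC} and \ref{AAA}, the modularity identity that converts the $n$-norm of the compression to $E\B(\H)E$ into an ordinary norm (your isomorphism $M_n(E\B(\H)E)\cong F\B(\H)F$ is just a repackaging of the paper's row isometry $W=[E_{11}V\ \cdots\ E_{n1}V]$), and a conditional expectation to push the image into $\N=\Phi(\M)$ before conjugating back by $V$. The only cosmetic difference is that you take the trace-preserving expectation $E\M E\to\N$ directly, whereas the paper uses the expectation of $\M$ onto the algebra $\L$ generated by $\Phi(\M)$ and $\A$ and then compresses by $E$; both yield the same estimate $\|\Gamma\|_n\leq\|\Pi\|$.
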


\begin{proof}  It follows from Lemma \ref{CCC} that there exists a set $\{E_1, E_2, \cdots, E_n\}$ of pairwise orthogonal and equivalent projections in $\M$ such that $E=E_1$. We may supplement this set to a set of matrix units $\{E_{ij}: 1\leq i, j\leq n\}$ for which $E_{ii}=E_i$ and define $F=I-E_1-E_2-\cdots -E_n$. Then the algebra $\A$ defined as
$$\A=\mathrm{span}\{F\cup \{E_{ij}: 1\leq i, j\leq n\}\}$$
is a finite dimensional von Neumann subalgebra of $\M$ having the same unit as $\M,$ and by Lemma \ref{AAA} we have a projection $\Psi: \B(\H)\rightarrow \M$ such that $\Psi$ is $\A$-modular and $||\Psi||\leq ||\Pi||$. Let $\L$ be the von Neumann subalgebra of $\M$ generated by $\Phi(\M)$ and $\A$. Then there is a projection $\Omega$ of norm $1$ from $\M $ onto $\L$.
%Note that $\Omega$ is also $\A$-modular.
Based on this we can construct a projection $\Gamma: \B(\H)\rightarrow \M$ by
$$\Gamma(T)=V^{\ast}\Omega(\Psi(VTV^{\ast}))V.$$
In order to estimate the norm $||\Gamma||_n$, we construct an isometry $W$ of $\H\oplus \H\oplus \cdots \oplus \H$ ($n$-times) onto $(E_{11}+E_{22}+\cdots +E_{nn})\H$ by defining a row matrix $$W=[E_{11}V\ E_{21}V\ \cdots,\ E_{n1}V]$$ in $M_{1\times n}(\B(\H))$. Then for any $X=[X_{ij}]_{i, j=1}^n\in M_n(\B(\H))$, we have
$$W\Gamma_n(X)W^{\ast}\in (E_{11}+E_{22}+\cdots +E_{nn})B(\H)(E_{11}+E_{22}+\cdots +E_{nn}).$$
Note $\Gamma_n(X)=[V^{\ast}\Omega(\Psi(VX_{ij}V^{\ast}))V]_{i, j=1}^n$ and then by the $\A$-modularity of $\Omega$ and $\Psi$, we get
\begin{align}W \Gamma_n(X) W^{\ast}&=\sum_{i, j=1}^nE_{i1}\Omega (\Psi(VX_{ij}V^{\ast}))E_{1j}
\nonumber\\
&= \Omega (\Psi(\sum_{i, j=1}^nE_{i1} VX_{ij}V^{\ast}E_{1j}))\nonumber\\
&=\Omega (\Psi(W[X_{ij}]_{i, j=1}^nW^{\ast})).\nonumber\end{align}
Hence \begin{align}
||\Gamma_n(X)||&=  ||W\Gamma_n([X_{ij}]_{i, j=1}^n) W^{\ast}||\nonumber\\
&= ||\Omega(\Psi(W[X_{ij}]_{i, j=1}^nW^{\ast}))||\nonumber\\
&
\leq ||\Psi||  ||[X_{ij}]_{i, j=1}^n||\nonumber\\
&
\leq ||\Pi||  ||X||.\nonumber
\end{align} This completes the proof of the lemma.
\end{proof}

\begin{proof} [Proof of Theorem 1]  From the lemmas above we see that for each $n\in \mathbb{N}$,
 %it follows from Lemma \ref{EEE} that we may assume  without loss of generality that   $$\|\mathrm{Tr}(\Phi(I))\|=\|\mathrm{Tr}(E)\| < \frac{1}{n}$$ and that there exists an isometry $V: \H \to E(\H) $ such that for all $T$ in $\M $,  we have $\Phi(T) = VTV^*.$ It follows from Lemma \ref{DDD} that
there exists a projection $\Gamma^n$ of $\B(\H)$ onto $\M$ such that $$||\Gamma^{n}||_{n}=||\Gamma_{n}^{n}||\leq ||\Pi||.$$
Since $\{\Gamma^{n}: n\in \mathbb{N}\}$ is a bounded sequence of projections of $\B(\H)$ onto ${\M}$, it has a subnet that converges pointwise ultraweakly  to a linear mapping $\Theta$ of $\B(\H)$ onto $ {\M}$. Hence $\Theta$ is a completely bounded projection from $\B(\H)$ onto $ \M $ with $||\Theta||_{cb}\leq ||\Pi||$, and
%Let $\delta_1=\theta|_{\widetilde{B(\H)}}$. Then $\delta_1$ is a completely bounded projection of %$\widetilde{B(\H)}$ onto $\widetilde{\M}$. For $X\in \B(\H)$, let 5$\widetilde{\delta(X)}=\delta_1(\widetilde{X})$. This defines a completely bounded projection $\delta$ from $B(\H)$ onto $\M$ with $||\delta||_{cb}\leq C$.
it follows from  \cite{CS1} or \cite{P3}  that $\M$ is an injective von Neumann algebra. This completes the proof.
\end{proof}

We will consider a type II$_1$ von Neumann algebra $\M$ but now also assume that $\M$ is a factor on a Hilbert space $\H$ which is a complemented subspace  of $\B(\H)$. We will present four results of the type "If $\M$ has a certain property, then $\M$ is injective". Only the first corollary is new, the second and the third appeared in \cite{C1}, \cite{P2}, \cite{P3} and the fourth in \cite{HP}. We have included new proofs here, because these results follow easily from our theorem.   We remind the readers of  the
four properties we want to look at.

\begin{definition} Let $\M$ be a factor of type $\mathrm{II}_1$.

(a) Let $\R$ denote the hyperfinite type $\mathrm{II}_1$ factor. Then $\M$ is said to be a McDuff factor if $\M\cong \M\overline{\otimes } \R$.

(b) $\M$ is said to be non-prime if $\M$ is isomorphic to the tensor product $\S\overline{\otimes }\T$  of two type $\mathrm{II}_1$ factors $\S$ and $\T$.
\end{definition}

In Kadison and Ringrose's book (Exercise 13.4.6 of \cite{KR}), the authors studied the fundamental group $\F(\M) \subseteq (\mathbb{R}^{+}, \times)$ of a type II$_1$ factor $\M$. We say that the fundamental group of $\M$ is nontrivial if $\F(\M)\neq \{1\}$.

As usual we will let $\L(\mathbb{F}_n)$ denote the  von Neumann algebra  generated by the left regular representation of the non-abelian free group $\mathbb{F}_n$ on $n$ generators ($2\leq n \leq \infty$).  Let $\L(\mathbb{F}_r)$ denote the interpolated free group factors for $r\in (1, +\infty]$.  We refer to the articles \cite{Dy} and \cite{Ra} for the constructions and properties of the interpolated free group factors.
We have obtained the following corollaries.

\begin{corollary} \label{QQQ} Let $ \M$ be a factor of type $\mathrm{II}_1$ on a Hilbert space $\H$.   If the fundamental group $\F(\M)\neq \{1\}$ and there is a bounded projection $\Pi$ of $\B(\H)$ onto $\M$, then $\M$ is  injective. \end{corollary}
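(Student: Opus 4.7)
The plan is to reduce the corollary directly to Theorem \ref{TTT} by using the nontriviality of the fundamental group to manufacture a suitable normal injective $*$-endomorphism of $\M$.

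Since $\F(\M)$ is a subgroup of $(\mathbb{R}^{+},\times)$ and is strictly larger than $\{1\}$, it contains some $t$ with $0<t<1$ (replace $t$ by $t^{-1}$ if necessary). Unravelling the definition of the fundamental group for $t\le 1$, this means there is a projection $p\in\M$ of scalar trace $\tau(p)=t$ together with a $*$-isomorphism $\Phi_0:\M\to p\M p$. Composing $\Phi_0$ with the inclusion $p\M p\hookrightarrow\M$ yields a normal injective $*$-endomorphism $\Phi:\M\to\M$ satisfying $\Phi(I)=p$.

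Because $\M$ is a factor, its center equals $\mathbb{C}I$ and the center-valued trace reduces to the scalar trace, namely $\mathrm{Tr}(x)=\tau(x)I$. Hence $\mathrm{Tr}(\Phi(I))=\tau(p)I=tI$ and $\|\mathrm{Tr}(\Phi(I))\|=t<1$. All hypotheses of Theorem \ref{TTT} are now in force: $\Pi$ is a bounded projection of $\B(\H)$ onto $\M$ by assumption, and $\Phi$ is a normal injective $*$-endomorphism with $\|\mathrm{Tr}(\Phi(I))\|<1$. Theorem \ref{TTT} therefore implies that $\M$ is injective.

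There is no serious obstacle here: the corollary is essentially a factor-level translation of the main theorem. The only point requiring care is the interpretation of the fundamental group, namely that a nontrivial element can always be realised as a compression by a projection of trace strictly less than $1$; this is immediate from the group structure of $\F(\M)$, after which Theorem \ref{TTT} does all the work.
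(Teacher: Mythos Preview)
Your proof is correct and follows essentially the same route as the paper: pick $t\in\F(\M)$ with $0<t<1$, realise it via an isomorphism $\M\cong p\M p$ with $\tau(p)=t$, and apply Theorem~\ref{TTT} to the resulting endomorphism. The only difference is that you spell out explicitly why $\|\mathrm{Tr}(\Phi(I))\|=t$ in the factor case, which the paper leaves implicit.
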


\begin{proof}  Suppose   $\t$ is the normal faithful tracial state of $\M$.
Since $\F(\M)\neq \{1\}$, there exists a $t\in \F(\M)$, $0<t<1$. Thus there exists a nonzero projection $E$ in $\M$  such that $\t(E)=t$ and $\M\cong E\M E$. Let $\P: \M\rightarrow E\M E$ be a normal  $\ast$-isomorphism. Then the result follows from Theorem \ref{TTT}.
\end{proof}

\begin{corollary} \label{SSS} Let $ \M$ be a factor of type $\mathrm{II}_1$ on a Hilbert space $\H$. If $\M$ is McDuff and there is a bounded projection $\Pi$ of $\B(\H)$ onto $\M$, then $\M$ is  injective. \end{corollary}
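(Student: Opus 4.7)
The plan is to reduce to Theorem \ref{TTT}: since $\M$ is already complemented in $\B(\H)$, it suffices to exhibit a normal injective $\ast$-endomorphism $\Phi:\M\to\M$ with $\|\mathrm{Tr}(\Phi(I))\|<1$. The McDuff hypothesis makes this essentially immediate, because the hyperfinite factor $\R$ supplies projections of arbitrary trace in $[0,1]$, and tensoring with such a projection provides a canonical compression of $\M$ into itself.

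To carry this out I would first fix a $\ast$-isomorphism $\pi:\M\to\M\overline{\otimes}\R$ coming from the McDuff property, and choose a projection $e\in\R$ of trace $\tau_\R(e)=\tfrac12$. Next, define $\sigma:\M\to\M\overline{\otimes}\R$ by $\sigma(x):=x\otimes e$, which is a normal $\ast$-homomorphism and is injective because $e\neq 0$. Finally, set $\Phi:=\pi^{-1}\circ\sigma:\M\to\M$; as a composition of normal injective $\ast$-homomorphisms it is a normal injective $\ast$-endomorphism of $\M$ (not unital, with $\Phi(I)=\pi^{-1}(I\otimes e)$ a proper projection in $\M$).

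It then remains to compute $\|\mathrm{Tr}(\Phi(I))\|$. Since $\M$ is a factor of type $\mathrm{II}_1$, the center-valued trace is $\mathrm{Tr}(\,\cdot\,)=\tau_\M(\,\cdot\,)I$, and by uniqueness of the tracial state on a $\mathrm{II}_1$ factor the isomorphism $\pi$ intertwines $\tau_\M$ with the product trace $\tau_\M\otimes\tau_\R$ on $\M\overline{\otimes}\R$. Therefore
\[
\mathrm{Tr}(\Phi(I))=\tau_\M\bigl(\pi^{-1}(I\otimes e)\bigr)\,I=(\tau_\M\otimes\tau_\R)(I\otimes e)\,I=\tau_\R(e)\,I=\tfrac12 I,
\]
so $\|\mathrm{Tr}(\Phi(I))\|=\tfrac12<1$, and Theorem \ref{TTT} delivers the injectivity of $\M$.

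Overall the argument is essentially bookkeeping: the only substantive point is to notice that McDuffness yields a self-embedding of $\M$ with arbitrarily small trace on its image, which is precisely the input Theorem \ref{TTT} demands. There is no real obstacle comparable to the matrix-unit constructions in the proof of the theorem itself; everything hard has already been absorbed into Theorem \ref{TTT}, and Corollary \ref{SSS} is merely a clean application of it.
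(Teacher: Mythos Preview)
Your argument is correct. The only difference from the paper is the route to Theorem~\ref{TTT}: the paper observes that McDuff implies $\F(\M)=\mathbb{R}^+$ and then invokes Corollary~\ref{QQQ}, which in turn produces an isomorphism $\M\cong E\M E$ to feed into Theorem~\ref{TTT}; you instead bypass the fundamental group entirely and build the endomorphism by hand as $x\mapsto \pi^{-1}(x\otimes e)$. Your $\Phi$ is not onto the full corner $\Phi(I)\M\Phi(I)$ (its image is the copy of $\M$ sitting as $\M\otimes\mathbb{C}e$ inside $\M\overline{\otimes}e\R e$), whereas the paper's endomorphism is; but Theorem~\ref{TTT} does not require surjectivity onto a corner, only injectivity and $\|\mathrm{Tr}(\Phi(I))\|<1$, so this is harmless. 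Your route is marginally more direct; the paper's route has the advantage of isolating the intermediate fact $\F(\M)=\mathbb{R}^+$, which is of independent interest and makes Corollary~\ref{SSS} an immediate special case of Corollary~\ref{QQQ}.
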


\begin{proof}
If $\M$ is McDuff, then $\M\cong \M \overline{\otimes} \R$ which in turn implies that $\F(\M)=\mathbb{R}^{+}$ and  the result follows from Corollary \ref{QQQ}.
\end{proof}

\begin{corollary}   Let $ \M$ be a factor of type $\mathrm{II}_1$ on a Hilbert space $\H$. If $\M$ is non-prime and there is a bounded projection $\Pi$
from $\B(\H)$ onto $\M$, then $\M$ is an injective  factor.\end{corollary}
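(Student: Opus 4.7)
The plan is to reduce to Corollary~\ref{QQQ} by showing $\F(\M)\ne\{1\}$ for the non-prime type II$_1$ factor $\M = \S\overline{\otimes}\T$. Fix $n\ge 2$. Since both $\S$ and $\T$ are type II$_1$, each contains a unital copy of $M_n(\mathbb{C})$; pick matrix units $\{f_{ij}\}_{i,j=1}^{n}\subset\S$ and $\{e_{ij}\}_{i,j=1}^{n}\subset\T$, write $\S_0 = f_{11}\S f_{11}$ and $\T_0 = e_{11}\T e_{11}$, so that $\S\cong M_n(\S_0)$ and $\T\cong M_n(\T_0)$. This gives
\[
\M \;\cong\; M_n\bigl(\S\overline{\otimes}\T_0\bigr) \;\cong\; M_n\bigl(\S_0\overline{\otimes}\T\bigr),
\]
exhibiting two corners of $\M$ of trace $1/n$: $E\M E = \S\overline{\otimes}\T_0$ with $E = I_\S\otimes e_{11}$, and $F\M F = \S_0\overline{\otimes}\T$ with $F = f_{11}\otimes I_\T$. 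Both corners are abstractly isomorphic to $\M_{1/n}$.

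The key ingredient is the partial isometry $u = \sum_{i=1}^{n} f_{i1}\otimes e_{1i}\in\M$, which satisfies $u^*u = F$ and $uu^* = E$, so that $\mathrm{Ad}(u)$ implements a $*$-isomorphism from $F\M F$ onto $E\M E$ inside $\M$. To conclude $1/n\in\F(\M)$, one would compose $\mathrm{Ad}(u)$ with a $*$-isomorphism $\M\to F\M F$; the latter uses the matrix-unit system on the $\S$-side to absorb the $M_n(\mathbb{C})$-factor coming from the $\T$-side into the $\S$-side, realizing $\M$ concretely as the corner $F\M F$. Once the identification $\M\cong\M_{1/n}$ is in hand, Corollary~\ref{QQQ} produces a normal $*$-isomorphism $\Phi\colon\M\to E\M E\subset\M$ with $\|\mathrm{Tr}(\Phi(I))\| = \tau(E) = 1/n < 1$, and Theorem~\ref{TTT} then gives that $\M$ is injective.

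The main obstacle is precisely the construction of the $*$-isomorphism $\M\to F\M F$ in the step above. Neither $\S$ nor $\T$ need be isomorphic to its own $1/n$-amplification in isolation, so a simple factorwise argument does not suffice; instead the non-prime hypothesis must be used essentially, redistributing the $M_n(\mathbb{C})$-factor between the two tensor slots via the partial isometry $u$ and the commuting pair of matrix subalgebras $\{f_{ij}\}\otimes I_\T$ and $I_\S\otimes\{e_{ij}\}$. This is the one place where non-primeness is used in a way that goes beyond what is available from the internal structure of $\S$ or $\T$ individually, and once this step is carried out, the reduction to Corollary~\ref{QQQ} (and hence to Theorem~\ref{TTT}) is immediate.
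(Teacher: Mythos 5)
There is a genuine gap, and it is the one you flag yourself: the construction of a $*$-isomorphism $\M\to F\M F$ is exactly the assertion $\tfrac1n\in\F(\M)$, and nothing in your argument makes progress toward it. Observe that everything you actually establish --- the commuting matrix units $\{f_{ij}\}$ and $\{e_{ij}\}$, the partial isometry $u=\sum_i f_{i1}\otimes e_{1i}$ with $u^*u=F$ and $uu^*=E$, and the identifications $\M\cong M_n(\S\overline{\otimes}\T_0)\cong M_n(\S_0\overline{\otimes}\T)$ --- holds verbatim in an \emph{arbitrary} II$_1$ factor once one fixes a unital copy of $M_n(\C)\otimes M_n(\C)$, which always exists: any two projections of trace $\tfrac1n$ are equivalent, and any corner of trace $\tfrac1n$ amplifies back to $\M$. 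So the tensor decomposition has not been used in any essential way, the ``redistribution'' step only reproves $\M\cong M_n(F\M F)$, and the missing step is equivalent to the conclusion you are after. Worse, the strategy cannot be repaired: non-primeness does not imply $\F(\M)\neq\{1\}$. Unique prime factorization results in the spirit of Ozawa and Popa yield $\F(\S\overline{\otimes}\T)=\F(\S)\F(\T)$ for suitable classes of factors, and such factors with trivial fundamental group exist; hence there are non-prime II$_1$ factors with trivial fundamental group, and no argument of the form ``non-prime $\Rightarrow\F(\M)\neq\{1\}$'' can succeed.

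The paper's proof sidesteps the fundamental group of $\M$ itself. Choose a unital copy $\R\subseteq\S$ of the hyperfinite II$_1$ factor. Then $\R\overline{\otimes}\T\subseteq\M$ is the image of the normal trace-preserving conditional expectation of $\M$, so composing with $\Pi$ shows that $\R\overline{\otimes}\T$ is complemented in $\B(\H)$. Since $\R\cong\R\overline{\otimes}\R$, the factor $\R\overline{\otimes}\T$ is McDuff, hence has fundamental group $\mathbb{R}^{+}$, and Corollary \ref{SSS} (via Corollary \ref{QQQ} and Theorem \ref{TTT}) makes it injective. Injectivity then passes to the subfactor $\C I\overline{\otimes}\T$, i.e.\ to $\T$, and by symmetry to $\S$, hence to $\M=\S\overline{\otimes}\T$. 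This is the correct way to exploit non-primeness here: replace $\M$ by a complemented tensor-product subalgebra that demonstrably has non-trivial fundamental group, rather than attempting to show that $\M$ itself does.
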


\begin{proof} Suppose  $\M=  \S \overline{\otimes} \T$ where $\S$ and $\T$ are two type II$_1$ factors. Then $\S$ contains a   copy $\R$   of the hyperfinite type II$_1$ factor with the same unit as $\S$. Hence $\M \supseteq \R \overline{\otimes } \T$ and since $\M$ is a type II$_1$ factor, the von Neumann algebra $\R \overline{\otimes } \T$ is complemented in $\M$ and then also complemented in $\B(\H)$.
 %Suppose $E: \M \rightarrow \R\otimes \T$ is the conditional expectation from $\M$ onto $\R\otimes \T$. Then $E\circ \Pi: B(\H)\rightarrow \R\otimes \M_2$ is a bounded projection from $\B(\H)$ onto $\R\otimes \M_2$.
%Let $e\in \R$ be a projecton with trace $\frac{1}{2}$.
On the other hand, $\R \overline{\otimes}\T$ is clearly a McDuff factor and then it is injective by Corollary \ref{SSS}. Since $\mathbb{C}I\overline{\otimes }\T$ is a type II$_1$ subfactor of the injective type II$_1$ factor $\R\overline{\otimes }\T$,  we see that  $\T$ is  injective and  by symmetry $\S$ is injective. Hence  $\M=\S\overline{\otimes} \T$ is also injective and the corollary follows. \end{proof}

The last corollary is due to Haagerup and Pisier and was presented in Corollary 4.7 of \cite{HP}.
We present   a proof which is based on Voiculescu's formula %(see \cite{Vo} and \cite{VDN})
$$\L(\mathbb{F}_{k+1})\cong \L(\mathbb{F}_{n^2k+1})\otimes M_{n}(\mathbb{C})\hskip24pt (k, n\in\mathbb{N})$$
and its generalization to the interpolated free froup factors
$$\L(\mathbb{F}_{r})_t\cong \L(\mathbb{F}_{(r-1)t^{-2}+1}).\hskip24pt  (1< r\leq \infty, 0<t<\infty)$$
The latter formula was obtained independently by Dykema \cite{Dy} and Radulescu  \cite{Ra}.

\begin{corollary} \label{III} No   interpolated free group factor is isomorphic to a  complemented von Neumann algebra of type $\mathrm{II}_1$.
\end{corollary}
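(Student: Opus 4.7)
The plan is to assume for contradiction that some interpolated free group factor $\L(\mathbb{F}_r)$ with $1 < r \leq \infty$ is isomorphic to a complemented von Neumann algebra $\M \subseteq \B(\H)$ of type II$_1$, and then to apply Theorem \ref{TTT} to conclude that $\L(\mathbb{F}_r)$ is injective. Since no free group factor is injective (by Murray--von Neumann together with Connes \cite{C1976}), this yields the desired contradiction. To invoke Theorem \ref{TTT} it suffices to exhibit a normal injective $\ast$-endomorphism $\Phi: \L(\mathbb{F}_r) \to \L(\mathbb{F}_r)$ with $\|\mathrm{Tr}(\Phi(I))\| < 1$ and then to transport $\Phi$ through the fixed isomorphism $\L(\mathbb{F}_r) \cong \M$ to obtain a map on $\M$ satisfying the hypotheses of the theorem.

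To build $\Phi$, I would fix any $t \in (0,1)$ and set $s := (r-1)t^{-2} + 1$, so that $s > r$. Pick a projection $E \in \L(\mathbb{F}_r)$ with $\t(E) = t$. The second Dykema--Radulescu formula displayed above yields an isomorphism $\rho: \L(\mathbb{F}_s) \to E \L(\mathbb{F}_r) E$. Since $r \leq s$, there is also a unital normal $\ast$-embedding $\iota: \L(\mathbb{F}_r) \hookrightarrow \L(\mathbb{F}_s)$; for integer parameters this is induced by the inclusion $\mathbb{F}_r \subseteq \mathbb{F}_s$ of free groups, and in the interpolated range it is a standard consequence of the free-product calculus of \cite{Dy}, \cite{Ra}, which realizes $\L(\mathbb{F}_s)$ as a free product having $\L(\mathbb{F}_r)$ as a unital free factor. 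Set $\Phi := \rho \circ \iota$, regarded as a map into $\L(\mathbb{F}_r)$ via the inclusion $E \L(\mathbb{F}_r) E \subseteq \L(\mathbb{F}_r)$. Then $\Phi$ is a normal injective $\ast$-homomorphism with $\Phi(I) = E$, so $\mathrm{Tr}(\Phi(I)) = t I$ and $\|\mathrm{Tr}(\Phi(I))\| = t < 1$, exactly as required.

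The only potentially delicate step is the existence of the unital embedding $\iota$ in the interpolated range; granted this, Theorem \ref{TTT} immediately forces $\M \cong \L(\mathbb{F}_r)$ to be injective, which is the sought contradiction and completes the argument.
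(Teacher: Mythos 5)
Your proposal is correct and follows essentially the same route as the paper: the paper also embeds $\L(\mathbb{F}_r)$ unitally into a compression $\L(\mathbb{F}_r)_t\cong\L(\mathbb{F}_{(r-1)t^{-2}+1})$ (taking the specific value $t=\frac{1}{2}$, whereas you allow any $t\in(0,1)$) to produce a normal injective $\ast$-endomorphism $\Phi$ with $\|\mathrm{Tr}(\Phi(I))\|<1$, and then invokes Theorem \ref{TTT} to contradict the non-injectivity of the interpolated free group factors. The unital embedding $\L(\mathbb{F}_r)\subseteq\L(\mathbb{F}_s)$ for $r\leq s$ is likewise taken as known from \cite{Dy} and \cite{Ra} in the paper's proof.
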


%showed that if $\M\subset B(\H)$ is a von Neumann subalgebra isomorphic to the
%von Neumann algebra $L(F_n)$  associated to the free group with $n$ ($2\leq n\leq \infty$)
%generators, then there is no bounded linear projection
%from $B(\H)$ onto $\M$ (see). The following result is an easy consequence of  Remark 2.4 or result of Haagerup %and Pisier  we just mentioned.

%\begin{corollary} For any  $r$ ($1<r<\infty$), let $\M=\L(\mathbb{F}_r)\subset B(\H)$ be the interpolated free group %factor acting on a Hilbert space $\H$ (see \cite{Dy} or \cite{Ra} for definition of interpolated free group %factors). Then there is no bounded projection from $B(\H)$ onto $\M$.
%\end{corollary}

\begin{proof} Let $ \L(\mathbb{F}_r) $ be the interpolated free group factor for $r$ ($1<r\leq \infty$). Suppose $\M \subseteq  \B(\H)$ is a complemented type II$_1$ von Neumann algebra which is isomorphic to $\L(\mathbb{F}_r)$. Since $\L(\mathbb{F}_{r})_{\frac{1}{2}}\cong\L(\mathbb{F}_{4(r-1)+1})$ and $\L(\F_{r})\subseteq  \L(\mathbb{F}_{4(r-1)+1})$,  there is a injective $\ast$-isomorphism from $\M$ into $\M_{\frac{1}{2}}$. It follows from Theorem \ref{TTT} that $\M$ is injective which is a contradiction and the corollary follows.
  %von Neumann subalgebra $\N$ of which  is isomorphic to $L(\mathbb{F}_{4(r-1)+1})$ such that $\M\cong \N \overline{\otimes } M_2(\mathbb{C})$.  Since $\N\cong $ also contains a copy of $L(\mathbb{F}_r)$, then  is a finite von Neumann algebra and $\N\subset \M$, $\N$ is also  if there is a bounded projection $\Pi$ of $B(\H)$ onto $\M$, then there will be a completely bounded projection from $B(\H)$ onto $\M$ and $\M=\L(\mathbb{F}_r)$ would  be injective which is a contradiction.
\end{proof}

\begin{remark}  There exist factors of type $\mathrm{II}_1$ with trivial fundamental groups that are not complemented subspaces. \end{remark}

It follows from Popa's work \cite{Popa} that for each finite $n\geq 2$, there exists free, ergodic, measure-preserving actions $\sigma $  of $\mathbb{F}_n$ on $L^{\infty}([0, 1], \mu)$, where $\mu$ is the Lebesgue measure on $[0, 1]$, such that the crossed product $$\M=L^{\infty}([0, 1], \mu)\rtimes _{\sigma}\mathbb{F}_n$$ is a factor of type II$_1$ with trivial fundamental group, $\F(\M)=\{1\}$.

By construction, $\M$ contains a von Neumann subalgebra $\N$ which is isomorphic to $\L(\mathbb{F}_n)$.
%Let $\sigma_0$ be the free ergodic action of $SL(2, \mathbb{Z})$ on $\mathbb{T}^2$ implemented by the action of $SL(2, \mathbb{Z})$ on $\mathbb{Z}^2$ (thus $\sigma_0$ preserves the Lebeague measure $\mu_0$ on $\mathbb{T}^2$). Let $\Gamma_0\subset SL(2, \mathbb{Z})\subset SL(2, \mathbb{Z})$ be the subgroup of finite index (e.g., $\Gamma_0=F_n$). Let $\sigma_1$ be any ergodic, measure-preserving action of $\gamma_0$ on some probability space $(X_1, \mu_1)=(\mathbb{T}^2, \mu_0)\times (X_1, \mu_1)$ and $\sigma=\sigma_0\times \sigma_1$.
%Then the crossed product von Neumann algebra $\M=L^{\infty}(X, \mu)\rtimes_{\sigma} \Gamma_0$   is a factor of type II$_1$ with fundamental group  $\F(\M)=\{1\}$ (see \cite{Popa}).
%There is a von Neumann subalgebra $\N$ of $\M$ which is isomorphic to $L(F_n)$.
Let $E: \M\rightarrow \N$ be the conditional expectation of $\M$ onto $\N$ (see \cite{KR} and \cite{To}). If there is a bounded projection $\Pi$ of $  \B(\H)$ onto $\M$, then $E\circ \Pi$ would be a bounded projection of   $B(\H)$ onto $\N$ which contradicts with Corollary \ref{III} (or Corollary 4.7 in    \cite{HP}). Hence there is no bounded projection from $\B(\H)$ onto $\M$.

\end{document}